\theoremstyle{plain}
\newtheorem{proposition}{Proposition}[section]
\newtheorem{theorem}[proposition]{Theorem}
\def\msig{\mc{F}^\sigma_m}
\def\mc{\mathcal}
\def\msig{\mc{F}^\sigma_m}
\def\mc{\mathcal}
\def\F{\mathcal{F}}
\def\tr{\textrm}
\def\progCE{\textsf{HypercubeEdgeDensityChecker}}
\def\progCV{\textsf{HypercubeVertexDensityChecker}}
\def\progCPE{\textsf{PartialHypercubeEdgeDensityChecker}}
\begin{document}

\title{Tur\'an densities of hypercubes}

\author{Rahil Baber}

\date\today

\maketitle

\begin{abstract}
In this paper we describe a number of extensions to Razborov's
semidefinite flag algebra method. We will begin by showing how to
apply the method to significantly improve the upper bounds of edge
and vertex Tur\'an density type results for hypercubes. We will then
introduce an improvement to the method which can be applied in a
more general setting, notably to $3$-uniform hypergraphs, to get a
new upper bound of $0.5615$ for $\pi(K_4^3)$.

For hypercubes we improve Thomason and Wagner's result on the upper
bound of the edge Tur\'an density of a 4-cycle free subcube to
$0.60318$ and Chung's result on forbidding 6-cycles to $0.36577$. We
also show that the upper bound of the vertex Tur\'an density of
$\mc{Q}_3$ can be improved to $0.76900$, and that the vertex Tur\'an
density of $\mc{Q}_3$ with one vertex removed is precisely $2/3$.

\end{abstract}

\section{Introduction}\label{SUBSEC:IntroHypercube}

Razborov's flag algebra method introduced in \cite{RF} has proven to
be an invaluable tool in finding upper bounds of Tur\'an densities
of hypergraphs. Many results have been found through its
application, for some such results and descriptions of the
semidefinite flag algebra method as applied to hypergraphs see
\cite{B11}, \cite{BT}, \cite{BTNew}, \cite{FV}, \cite{R4}. Later in
this paper we will describe how to improve the bounds Razborov's
method is able to attain, in particular by using partially defined
graphs. However, to begin with we will extend Razborov's method from
hypergraphs to hypercubes.

An $n$-dimensional hypercube $\mc{Q}_n$ is a $2$-graph with $2^n$
vertices. Setting $V(\mc{Q}_n)=\{0,1,\ldots,2^n-1\}$ we can define
$E(\mc{Q}_n)$ as follows: $v_1v_2\in E(\mc{Q}_n)$ if and only if
$v_1$ differs from $v_2$ by precisely one digit in their binary
representations. For example $E(\mc{Q}_2)=\{01, 02, 13, 23\}$ (see
also Figure \ref{Fig:Layers}). It is easy to see that the binary
representations of the vertices indicate the coordinates of the
vertices of a unit hypercube in $\mathbb{R}^n$. Let us also define
the layers of a hypercube which will be useful later. \emph{Layer}
$m$ of $\mc{Q}_n$ consists of all vertices in $V(\mc{Q}_n)$ which
have $m$ digits that are one in their binary representations. For
example in $\mc{Q}_3$, layer $0=\{0\}$, layer $1=\{1,2,4\}$, layer
$2=\{3,5,6\}$, and layer $3=\{7\}$, see Figure \ref{Fig:Layers}.

\begin{figure}[tbp]
\begin{center}
\includegraphics[height=3.6cm]{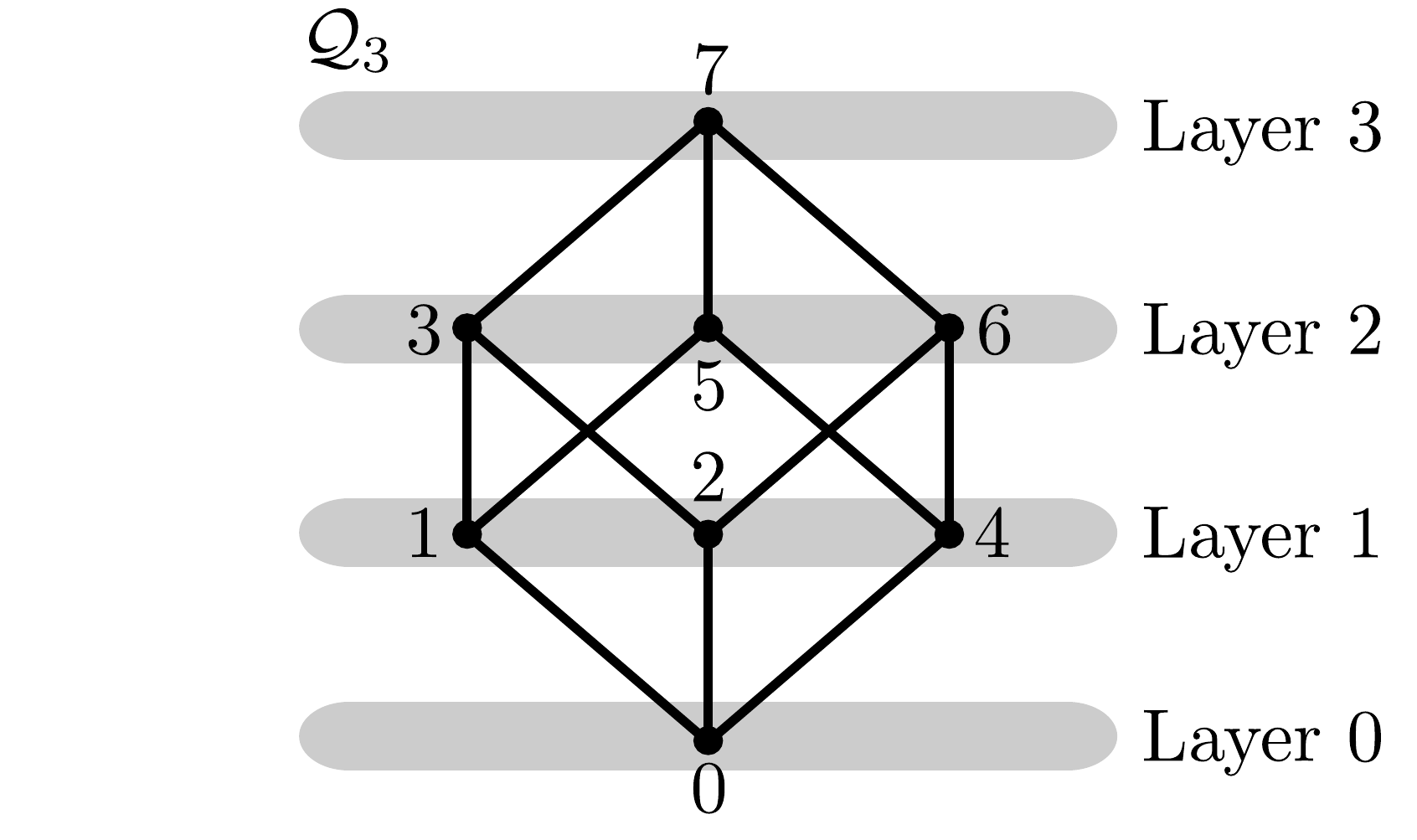}
\caption{A labelled $\mc{Q}_3$. The grey sets indicate the different
layers of the hypercube.}\label{Fig:Layers}
\end{center}
\end{figure}

We will consider two different types of Tur\'an problems involving
hypercubes. In the first type we will be interested in the following
question: given a forbidden family of graphs $\mc{F}$, what is the
maximum number of edges an $\mc{F}$-free subgraph of $\mc{Q}_n$ can
have? We are particularly interested in the limit of the maximum
hypercube edge density as $n$ tends to infinity, where we define the
\emph{hypercube edge density} of a subgraph $G$ of $\mc{Q}_n$ to be
$|E(G)|/|E(\mc{Q}_n)|$. We will refer to the limit as the \emph{edge
Tur\'an density}
\[
\pi_e(\mc{F}) =
\lim_{n\to\infty}\max_{G}\{|E(G)|/|E(\mc{Q}_n)|:G\subseteq\mc{Q}_n,\tr{
and is $\F$-free}\},
\]
a simple averaging argument shows it always exists.

Motivation to study the edge Tur\'an density comes from Erd\H os
\cite{EQ2} who conjectured that $\pi_e(\mc{Q}_2)=1/2$. It is easily
seen that $\pi_e(\mc{Q}_2)\geq 1/2$ by taking $\mc{Q}_n$ and
removing those edges that have one vertex in layer $2r-1$ and the
other in layer $2r$ for each $r$. Such subgraphs of $\mc{Q}_n$ are
$\mc{Q}_2$-free and contain exactly half the edges. The densest
known constructions which are $\mc{Q}_2$-free are given by Brass,
Harborth, and Nienborg \cite{BHN}, and they have a hypercube edge
density of approximately $(1+1/\sqrt{n})/2$. Chung \cite{Chung}
showed that $\pi_e(\mc{Q}_2)\leq (2+\sqrt{13})/9 = 0.62284$, her
argument was extended by Thomason and Wagner \cite{TW} using a
computer, to get the previously best known bound of $0.62256$. By
extending Razborov's semidefinite flag algebra technique to
hypercubes we will prove a significantly smaller upper bound of
$0.60680$. Later we will improve this bound further to $0.60318$
using partially defined hypercubes. Chung \cite{Chung} also
considered the edge Tur\'an density of $6$-cycles, and proved
$1/4\leq \pi_e(C_6) \leq \sqrt{2}-1 = 0.41421$. We will improve the
upper bound to $0.37550$, then using partially defined hypercubes
reduce it further to $0.36577$.

The second type of hypercube Tur\'an problem we will look at is very
similar to the first but focuses on the density of vertices rather
than edges. In the second type we are interested in the following
question: given a forbidden family of graphs $\mc{F}$, what is the
maximum number of vertices an $\mc{F}$-free induced subgraph of
$\mc{Q}_n$ can have? We are particularly interested in the limit of
the maximum hypercube vertex density as $n$ tends to infinity, where
we define the \emph{hypercube vertex density} of an induced subgraph
$G$ of $\mc{Q}_n$ to be $|V(G)|/|V(\mc{Q}_n)|$. We will refer to the
limit as the \emph{vertex Tur\'an density}
\[
\pi_v(\mc{F}) =
\lim_{n\to\infty}\max_{G}\{|V(G)|/|V(\mc{Q}_n)|:\tr{$G$ is an
$\F$-free induced subgraph of $\mc{Q}_n$}\},
\]
again a simple averaging argument shows it always exists.

The analogous problem to Erd\H os' conjecture is calculating
$\pi_v(\mc{Q}_2)$. E.A.\ Kostochka \cite{K} and independently
Johnson and Entringer \cite{JE} showed that $\pi_v(\mc{Q}_2)=2/3$.
Johnson and Talbot \cite{JT} proved that $\pi_v(R_1)=2/3$, where
$R_1$ is the graph formed by removing vertices $0$ and $1$ from
$\mc{Q}_3$. By extending Razborov's semidefinite flag algebra method
we will prove $\pi_v(R_2)=2/3$, where $R_2$ is the graph formed by
removing a single vertex from $\mc{Q}_3$. The value of
$\pi_v(\mc{Q}_3)$, however, still remains undetermined. A lower
bound of $3/4$ is easily achieved by considering the induced
subgraphs of $\mc{Q}_n$ formed by removing all vertices in layers
that are a multiple of four (i.e.\ layers $0,4,8,\ldots$). Although
we could not show $\pi_v(\mc{Q}_3)\leq 3/4$ we will prove
$\pi_v(\mc{Q}_3)\leq 0.76900$. We will also show that $1/2\leq
\pi_v(C_6)\leq 0.53111$.

\section{Vertex Tur\'an density}\label{SUBSEC:VertexHypercube}

Calculating the vertex Tur\'an density involves looking at induced
subgraphs of hypercubes. However, the structure of the hypercubes
may not be retained by the induced subgraphs. This structure will
prove to be useful and will simplify definitions later. Hence rather
than work directly with induced subgraphs we will instead use
vertex-coloured hypercubes that represent induced subgraphs of
$\mc{Q}_n$. In particular we will colour the vertices red and blue.
The induced subgraph that a red-blue vertex-coloured hypercube
represents can be constructed by removing those vertices that are
coloured red (as well as any edges that are incident to such
vertices) and keeping those vertices that are blue. The conjecture
that $\pi_v(\mc{Q}_3)=3/4$ comes from asking what is the maximum
number of vertices a $\mc{Q}_3$-free induced subgraph of $\mc{Q}_n$
can have. It is clear that this is equivalent to asking what is the
maximum number of blue vertices a vertex-coloured $\mc{Q}_n$ can
have such that it does not contain an all blue $\mc{Q}_3$. Therefore
the problem of calculating $\pi_v(\mc{Q}_3)$, and $\pi_v(\mc{F})$ in
general, can be translated into a problem involving forbidding
vertex-coloured hypercubes in a vertex-coloured $\mc{Q}_n$. We will
define the equivalent notion of vertex Tur\'an density for
vertex-coloured hypercubes shortly, but first we need some
definitions.

We will use the notation $(n,\kappa)_v$ to represent a
vertex-coloured $\mc{Q}_n$, where $\kappa:
V(\mc{Q}_n)\to\{\text{red, blue}\}$. We define $V(F)$ and $E(F)$ for
a vertex-coloured hypercube $F=(n,\kappa)_v$ to be $V(\mc{Q}_n)$ and
$E(\mc{Q}_n)$ respectively. Consider two vertex-coloured hypercubes
$F_1=(n_1,\kappa_1)_v,$ and $F_2=(n_2,\kappa_2)_v$. We say $F_1$ is
\emph{isomorphic} to $F_2$ if there exists a bijection $f:V(F_1)\to
V(F_2)$ such that for all $v_1v_2\in E(F_1)$, $f(v_1)f(v_2)\in
E(F_2)$ and for all $v\in V(F_1)$, $\kappa_1(v)=\kappa_2(f(v))$. We
say $F_1$ is a \emph{subcube} of $F_2$ if there exists an injection
$g:V(F_1)\to V(F_2)$ such that for all $v_1v_2\in E(F_1)$,
$g(v_1)g(v_2)\in E(F_2)$ and for all $v\in V(F_1)$ if
$\kappa_1(v)=\text{blue}$ then $\kappa_2(g(v))=\text{blue}$.

The \emph{vertex density} of $F=(n,\kappa)_v$ is
\[
d_v(F)=\frac{|\{v\in V(F) : \kappa(v)=\text{blue}\}|}{|V(F)|}.
\]
Note that this is analogous to the hypercube vertex density defined
in Section \ref{SUBSEC:IntroHypercube}. Given a family of
vertex-coloured hypercubes $\F$, we say $H$, a vertex-coloured
hypercube, is \emph{$\F$-free} if $H$ does not contain a subcube
isomorphic to any member of $\F$. The \emph{coloured vertex Tur\'an
density} of $\F$ is defined to be the following limit (a simple
averaging argument shows that it always exists)
\[
\pi_{cv}(\F)=\lim_{n\to\infty}
\max_\kappa\{d_v(H):H=(n,\kappa)_v\tr{ and is $\F$-free}\}.
\]

\begin{figure}[tbp]
\begin{center}
\includegraphics[height=3.6cm]{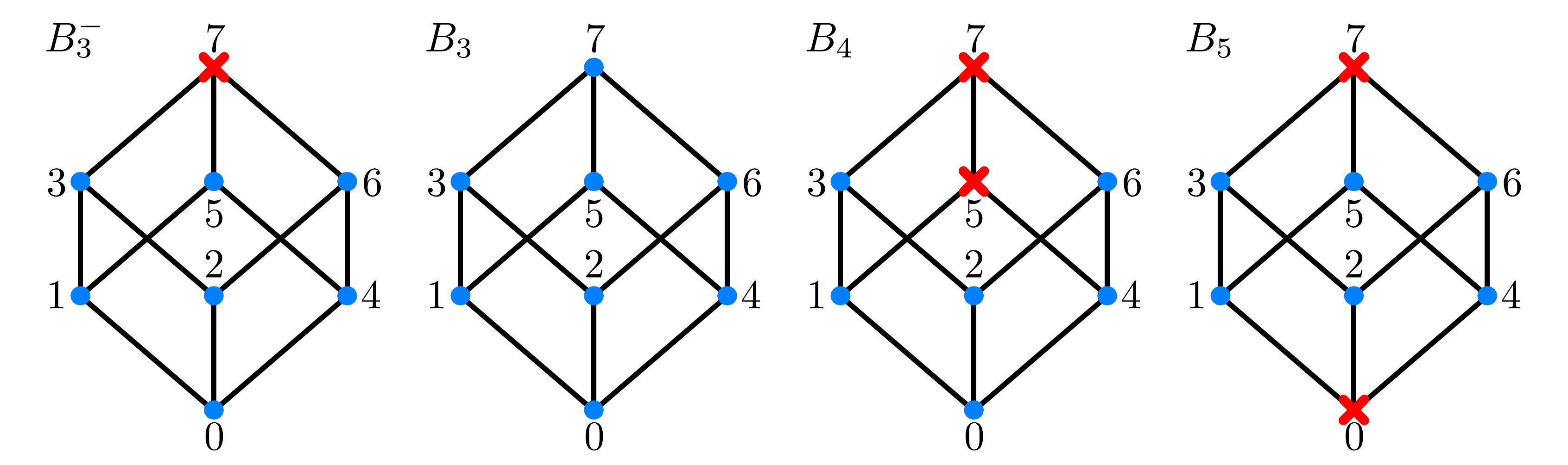}
\caption{The vertex-coloured hypercubes $B_3^-$, $B_3$, $B_4$, and
$B_5$. The blue vertices are represented by blue circles, and the
red vertices by red crosses.}\label{Fig:Vertex}
\end{center}
\end{figure}

Given these definitions it is easy to see that
$\pi_v(\mc{Q}_3)=\pi_{cv}(B_3)$, where $B_3$ is a $\mc{Q}_3$ with
all its vertices coloured blue, see Figure \ref{Fig:Vertex}. It is
also not hard to show that forbidding $R_2$ in $\mc{Q}_n$ is
equivalent to asking that a vertex-coloured hypercube is
$B_3^-$-free, where $B_3^-$ is a $\mc{Q}_3$ with vertex $7$ coloured
red and the remaining vertices coloured blue, see Figure
\ref{Fig:Vertex}. Hence $\pi_v(R_2)=\pi_{cv}(B_3^-)$. The final
vertex Tur\'an density we will consider is $\pi_v(C_6)$ which
requires a bit more work to convert into a vertex-coloured hypercube
problem. However, it is not too difficult to show that all
$6$-cycles in $\mc{Q}_n$ lie within a $\mc{Q}_3$ subgraph and it is
easy to check that there are only two distinct $6$-cycles in a
$\mc{Q}_3$ up to isomorphism. These two $6$-cycles can be
represented by two vertex-coloured hypercubes of dimension three
which we will call $B_4$ and $B_5$ (the names were chosen to be
consistent with \cite{B11}). Specifically vertices $5$ and $7$ are
coloured red in $B_4$ and vertices $0$ and $7$ are red in $B_5$ (the
remaining vertices are blue) see Figure \ref{Fig:Vertex}. Therefore
$\pi_v(C_6)=\pi_{cv}(B_4,B_5)$. By extending Razborov's method to
vertex-coloured hypercubes we will be able to prove the following
result.

\begin{theorem}\label{THM:CubeFree}
The following all hold:
\begin{enumerate}
\item[(i)] $\pi_v(R_2)=\pi_{cv}(B_3^-)=2/3$,
\item[(ii)] $3/4\leq \pi_v(\mc{Q}_3)=\pi_{cv}(B_3)\leq 0.76900$,
\item[(iii)] $1/2\leq \pi_v(C_6)=\pi_{cv}(B_4,B_5)\leq 0.53111$.
\end{enumerate}
\end{theorem}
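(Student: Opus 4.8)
The plan is to establish each part by exhibiting matching lower bound constructions and then deploying the flag algebra machinery developed earlier in the paper (now adapted to vertex-coloured hypercubes) to certify the upper bounds. I would organize the proof around three largely independent arguments, one per item, but they all share the same skeleton: reduce the problem to a semidefinite programming computation over a finite set of ``types'' and ``flags'' of bounded dimension, solve (or verify a solution of) that SDP by computer, and then round the numerical certificate to an exact rational (or algebraic) inequality.

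For part (i), the lower bound $\pi_{cv}(B_3^-)\ge 2/3$ follows from the standard layered construction: colour blue exactly the vertices of $\mathcal{Q}_n$ lying in layers $m$ with $m\not\equiv 0\pmod 3$, which avoids any blue $\mathcal{Q}_3$ missing only its top vertex (a $B_3^-$ inside $\mathcal{Q}_3$ spans three consecutive layers, and one of them is always killed), and has blue vertex density tending to $2/3$. For the matching upper bound I would set up the coloured vertex Tur\'an density SDP for the single forbidden configuration $B_3^-$, using types of dimension up to some small bound and the associated $\mathcal{F}^{\sigma}_m$-flags; the key point is that the optimal value of this relaxation is exactly $2/3$, and crucially that one can find an \emph{exact} dual certificate (positive semidefinite matrices with rational entries together with the density-shift inequalities) proving $d_v(H)\le 2/3+o(1)$ for every $B_3^-$-free $H$. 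So part (i) is an exact equality, which makes the rounding step genuinely necessary rather than a mere convenience.

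For parts (ii) and (iii) the lower bounds are again the easy layered constructions already indicated in the introduction: removing all vertices in layers divisible by four gives a $\mathcal{Q}_3$-free (hence $B_3$-free) induced subgraph of density $\to 3/4$, and an analogous periodic construction gives a $C_6$-free induced subgraph of density $\to 1/2$ (one must check that the chosen blue set contains no copy of $B_4$ or $B_5$, i.e.\ no $6$-cycle, which reduces to a finite check inside each $\mathcal{Q}_3$). The upper bounds $0.76900$ and $0.53111$ are not expected to be tight, so here one only needs to produce \emph{some} valid PSD certificate, and the numerical SDP output can be rounded crudely (inflating the bound slightly) without needing to hit an exact value. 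The computations are run by the \prog{} program described later in the paper, working with vertex-coloured hypercube flags up to dimension five or six; the output of each run is a finite list of PSD matrices whose associated quadratic-form inequality, summed against the flag-density identities, yields the stated numerical bound.

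The main obstacle is the exact rounding in part (i): semidefinite solvers return floating-point matrices that are only approximately optimal and only approximately PSD, so to get the clean value $2/3$ (rather than $2/3+\varepsilon$) one must perturb the numerical solution to a nearby rational PSD matrix for which the corresponding combinatorial inequality provably closes to exactly $2/3$. This typically requires identifying the zero eigenvectors of the optimal matrices (the ``tight'' configurations coming from the extremal construction), projecting onto their orthogonal complement, and arguing that the construction and the certificate are complementary-slack; I would carry this out configuration by configuration, and this verification is the delicate part of the whole proof. Parts (ii) and (iii), by contrast, are routine once the SDP is assembled, since any slack certificate suffices.
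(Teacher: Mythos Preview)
Your proposal is correct and follows essentially the same route as the paper: layered constructions for the lower bounds, and the vertex-coloured hypercube flag algebra SDP (with an exact-rounding step for part (i)) for the upper bounds. One small imprecision worth fixing: your parenthetical justification for the $B_3^-$ lower bound (``a $B_3^-$ inside $\mathcal{Q}_3$ spans three consecutive layers'') is not quite the right check, since being $B_3^-$-free means every $\mathcal{Q}_3$ subcube has at least \emph{two} red vertices; the layer-mod-$3$ construction works because a $\mathcal{Q}_3$ subcube has vertex counts $1,3,3,1$ across four consecutive layers, and either two of those layers are $\equiv 0\pmod 3$ or the single such layer is one of the middle two.
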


The proofs of the lower bounds in Theorem \ref{THM:CubeFree} are
given by simple constructions. The lower bound of $2/3$ for
$\pi_v(R_2)$ is proved by considering the induced subgraph of
$\mc{Q}_n$ formed by removing every third layer of vertices.
Similarly $\pi_v(\mc{Q}_3)\geq 3/4$ and $\pi_v(C_6)\geq 1/2$ can be
proved by looking at $\mc{Q}_n$ with every fourth layer removed and
every second layer removed respectively.

The upper bounds in Theorem \ref{THM:CubeFree} were calculated using
Razborov's semidefinite flag algebra method the details of which are
given in the following section (Section \ref{SUBSEC:RazVertex}).
Specific data for the problems given in Theorem \ref{THM:CubeFree}
can be found in the data files \texttt{B3-.txt}, \texttt{B3.txt},
and \texttt{B4B5.txt}, see \cite{BV10}. The calculations required to
turn the data into upper bounds are too long to do by hand and so we
provide the program {\progCV} to verify our claims, see \cite{BV10}
(note that the program does not use floating point arithmetic so no
rounding errors can occur).

It is worth mentioning that there are many non-isomorphic extremal
constructions that are $B_3^-$-free and have an asymptotic vertex
density of $2/3$. As far as we are aware this is the first known
case where Razborov's method has given an exact upper bound when
there are multiple extremal constructions.

For completeness we will describe the known $B_3^-$-free extremal
constructions (though there may be more we are unaware of). To
create such a construction on $\mc{Q}_n$ first take any partition of
the vertices which divides $\mc{Q}_n$ into two disjoint
$\mc{Q}_{n-1}$ subcubes which we will call $S_1$ and $S_2$. Next for
each $S_i$ choose a canonical labelling of its vertices, and an
integer $z_i\in\{0,1,2\}$. The labelling we chose for each $S_i$
defines a layering of its vertices, which we will use to colour
them. In particular the vertices in layer $m$ of $S_i$ are coloured
red if $m\equiv z_i$ mod $3$ and blue otherwise. It is easy to check
that the resulting coloured $\mc{Q}_n$ is $B_3^-$-free and
asymptotically has $2/3$ of its vertices coloured blue.

\subsection{Razborov's method on vertex-coloured
hypercubes}\label{SUBSEC:RazVertex}

Let $\F$ be a family of coloured hypercubes whose coloured vertex
Tur\'an density we wish to compute (or at least approximate). Let
$\mc{H}$ be the family of all $\F$-free vertex-coloured hypercubes
of dimension $l$, up to isomorphism. If $l$ is sufficiently small we
can explicitly determine $\mc{H}$ (by computer search if necessary).
For $H\in \mc{H}$ and a large $\F$-free coloured hypercube $G$, we
define $p(H;G)$ to be the probability that a random hypercube of
dimension $l$ from $G$ induces a coloured subcube isomorphic to $H$.

Trivially, the vertex density of $G$ is equal to the probability
that a random $\mc{Q}_0$ (a single vertex) from $G$ is coloured
blue. Thus, averaging over hypercubes of dimension $l$ in $G$, we
can express the vertex density of $G$ as
\begin{align}\label{HypercubeTrivialDensity}
d_v(G) = \sum_{H\in \mc{H}}d_v(H)p(H;G),
\end{align}
and hence $\pi_{cv}(\mc{F})\leq \max_{H\in\mc{H}}d_v(H)$. This
``averaging'' bound can be improved upon by considering how small
pairs of $\mc{F}$-free hypercubes can intersect and how many times
these intersections appear in the $H$ hypercubes. To utilize this
information we will use Razborov's method and extend his notion of
flags and types to hypercubes.

For vertex-coloured hypercubes we define flags and types as follows.
A \emph{flag}, $F=(G_F,\theta)$, is a vertex-coloured hypercube
$G_F$ together with an injective map $\theta:
\{0,1,\ldots,2^s-1\}\to V(G_F)$ such that $\theta(i)\theta(j)\in
E(G_F)$ if and only if $i$ and $j$ differ by precisely one digit in
their binary representations (i.e.\ $\theta$ induces a canonically
labelled hypercube). If $\theta$ is bijective (and so
$|V(G_F)|=2^s$) we call the flag a \emph{type}. For ease of notation
given a flag $F=(G_F,\theta)$ we define its dimension $dim(F)$ to be
the dimension of the hypercube underlying $G_F$. Given a type
$\sigma$ we call a flag $F=(G_F,\theta)$ a $\sigma$-\emph{flag} if
the induced labelled and coloured subcube of $G_F$ given by $\theta$
is $\sigma$.

Fix a type $\sigma$ and an integer $m\leq (l+dim(\sigma))/2$. (The
bound on $m$ ensures that an $l$-dimensional hypercube can contain
two $m$-dimensional subcubes overlapping in a dimension
$dim(\sigma)$ hypercube.) Let $\msig$ be the set of all
$\mc{F}$-free $\sigma$-flags of dimension $m$, up to isomorphism.
Let $\Theta$ be the set of all injective functions from
$\{0,1,\ldots,2^{dim(\sigma)}-1\}$ to $V(G)$, that result in a
canonically labelled hypercube. Given $F\in\msig$ and
$\theta\in\Theta$ we define $p(F,\theta;G)$ to be the probability
that an $m$-dimensional coloured hypercube $R$ chosen uniformly at
random from $G$ subject to $\text{im}(\theta)\subseteq V(R)$,
induces a $\sigma$-flag $(R,\theta)$ that is isomorphic to $F$.

If $F_a,F_b\in \F_m^\sigma$ and $\theta\in \Theta$ then
$p(F_a,\theta;G)p(F_b,\theta;G)$ is the probability that two
$m$-dimensional coloured hypercubes $R_a,R_b$ chosen independently
at random from $G$ subject to $\text{im}(\theta)\subseteq V(R_a)\cap
V(R_b)$, induce $\sigma$-flags $(R_a,\theta)$, $(R_b,\theta)$ that
are isomorphic to $F_a, F_b$ respectively. We define the related
probability, $p(F_a,F_b,\theta;G)$, to be the probability that two
$m$-dimensional coloured hypercubes $R_a,R_b$ chosen independently
at random from $G$ subject to $\text{im}(\theta)=V(R_a)\cap V(R_b)$,
induce $\sigma$-flags $(R_a,\theta)$, $(R_b,\theta)$ that are
isomorphic to $F_a, F_b$ respectively. It is easy to show that
$p(F_a,\theta;G)p(F_b,\theta;G)=p(F_a,F_b,\theta;G)+o(1)$ where the
$o(1)$ term vanishes as $|V(G)|$ tends to infinity.

Consequently taking the expectation over a uniformly random choice
of $\theta\in \Theta$ gives
\begin{equation}\label{a1:eq}
\mathbf{E}_{\theta\in\Theta}\left[p(F_a,\theta;G)p(F_b,\theta;G)\right]
= \mathbf{E}_{\theta\in\Theta}\left[p(F_a,F_b,\theta;G)\right]+o(1).
\end{equation}
Furthermore the expectation on the right hand side of (\ref{a1:eq})
can be rewritten in terms of $p(H;G)$ by averaging over
$l$-dimensional hypercubes of $G$, hence
\begin{equation}\label{a2:eq}
\mathbf{E}_{\theta\in\Theta}\left[p(F_a,\theta;G)p(F_b,\theta;G)\right]
=
\sum_{H\in\mc{H}}\mathbf{E}_{\theta\in\Theta_H}\left[p(F_a,F_b,\theta;H)\right]p(H;G)+o(1),
\end{equation}
where $\Theta_H$ is the set of all injective maps
$\theta:\{0,1,\ldots,2^{dim(\sigma)}-1\}\to V(H)$ which induce a
canonically labelled hypercube. Note that the right hand side of
(\ref{a2:eq}) is a linear combination of $p(H;G)$ terms whose
coefficients can be explicitly calculated, this will prove useful
when used with (\ref{HypercubeTrivialDensity}) which is of a similar
form.

Given $\F_m^\sigma$ and a positive semidefinite matrix $Q=(q_{ab})$
of dimension $|\mc{F}^\sigma_m|$, let
$\mathbf{p}_\theta=(p(F,\theta;G):F\in \mc{F}^\sigma_m)$ for
$\theta\in\Theta$. Using (\ref{a2:eq}) and the linearity of
expectation we have
\begin{align}
0\leq
\mathbf{E}_{\theta\in\Theta}[\mathbf{p}_\theta^TQ\mathbf{p}_\theta]
&= \sum_{H\in\mc{H}} c_H p(H;G)+o(1) \label{q1:eq}
\end{align}
where
\[
c_H = \sum_{F_a,F_b\in \mc{F}^\sigma_m}q_{ab}\mathbf{E}_{\theta\in
\Theta_H}[p(F_a,F_b,\theta;H)].
\]
Note that $c_H$ is independent of $G$ and can be explicitly
calculated. Combining (\ref{q1:eq}) with
(\ref{HypercubeTrivialDensity}) allows us to write the following
\begin{align*}
d_v(G)\leq
d_v(G)+\mathbf{E}_{\theta\in\Theta}[\mathbf{p}_\theta^TQ\mathbf{p}_\theta]
= \sum_{H\in\mc{H}} (d_v(H)+c_H) p(H;G)+o(1).
\end{align*}
Hence
\[
\pi_{cv}(\F)\leq \max_{H\in\mc{H}}(d_v(H)+c_H).
\]
Note that some of the $c_H$ may be negative and so for a careful
choice of $Q$ this may result in a better bound for the Tur\'an
density than the simple ``averaging'' bound derived from
(\ref{HypercubeTrivialDensity}). Our task has therefore been reduced
to finding an optimal choice of $Q$ which will lower the bound as
much as possible. This is a convex optimization problem in
particular a semidefinite programming problem. As such we can use
freely available software such as CSDP \cite{B} to find $Q$ and
hence a bound on $\pi_{cv}(\F)$.

Our argument can also be extended to consider multiple types
$\sigma_i$, dimensions $m_i$, and positive semidefinite matrices
$Q_i$ (of dimension $|\F_{m_i}^{\sigma_i}|$), to create several
terms of the form
$\mathbf{E}_{\theta\in\Theta}[\mathbf{p}_{i,\theta}^TQ_i\mathbf{p}_{i,\theta}]$,
where $\mathbf{p}_{i,\theta}=(p(F,\theta;G):F\in
\mc{F}^{\sigma_i}_{m_i})$. By considering $d_v(G)+\sum_i
\mathbf{E}_{\theta\in\Theta}[\mathbf{p}_{i,\theta}^TQ_i\mathbf{p}_{i,\theta}]$
we can get a more complicated bound to optimize. However, it is
still expressible as a semidefinite program and the extra
information often results in a better bound.

It is worth noting that in order to get a tight bound for
$\pi_{cv}(B_3^-)$ we used the methods described in \cite{B11}
(Section 2.4.2) to remove the rounding errors. Although the method
described in \cite{B11} is for hypergraphs, due to its length and
the ease in to which it can be converted into the hypercube setting
we did not feel it was worth reproducing here.

\section{Edge Tur\'an density}\label{SUBSEC:EdgeHypercube}

In this section we will describe a relatively straightforward
extension of Razborov's method to the edge Tur\'an density problem
for hypercubes. Later we will give a more complicated extension
using partially defined hypercubes which results in improved bounds.

When we looked at the vertex Tur\'an density problem we found that
rather than working directly with subgraphs of hypercubes it was
simpler to use vertex-coloured hypercubes instead. Similarly when
calculating the edge Tur\'an density we will use red-blue
edge-coloured hypercubes to represent subgraphs of hypercubes. The
subgraph an edge-coloured hypercube represents can be constructed by
removing those edges that are coloured red and keeping those edges
that are blue.

We will use the notation $(n,\kappa)_e$ to represent an
edge-coloured $\mc{Q}_n$, where $\kappa: E(\mc{Q}_n)\to\{\text{red,
blue}\}$. We define $V(F)$ and $E(F)$ for an edge-coloured hypercube
$F=(n,\kappa)_e$ to be $V(\mc{Q}_n)$ and $E(\mc{Q}_n)$ respectively.
Consider two edge-coloured hypercubes $F_1=(n_1,\kappa_1)_e,$ and
$F_2=(n_2,\kappa_2)_e$. We say $F_1$ is \emph{isomorphic} to $F_2$
if there exists a bijection $f:V(F_1)\to V(F_2)$ such that for all
$v_1v_2\in E(F_1)$, $f(v_1)f(v_2)\in E(F_2)$ and
$\kappa_1(v_1v_2)=\kappa_2(f(v_1)f(v_2))$. We say $F_1$ is a
\emph{subcube} of $F_2$ if there exists an injection $g:V(F_1)\to
V(F_2)$ such that for all $v_1v_2\in E(F_1)$, $g(v_1)g(v_2)\in
E(F_2)$ and if $\kappa_1(v_1v_2)=\text{blue}$ then
$\kappa_2(g(v_1)g(v_2))=\text{blue}$.

The \emph{edge density} of $F=(n,\kappa)_e$ is
\[
d_e(F)=\frac{|\{v_1v_2\in E(F) :
\kappa(v_1v_2)=\text{blue}\}|}{|E(F)|}.
\]
Note that this is analogous to the hypercube edge density defined in
Section \ref{SUBSEC:IntroHypercube}. Given a family of coloured
hypercubes $\F$, we say $H$, a coloured hypercube, is
\emph{$\F$-free} if $H$ does not contain a subcube isomorphic to any
member of $\F$. The \emph{coloured edge Tur\'an density} of $\F$ is
defined to be the following limit (a simple averaging argument shows
that it always exists)
\[
\pi_{ce}(\F)=\lim_{n\to\infty}
\max_\kappa\{d_e(H):H=(n,\kappa)_e\tr{ and is $\F$-free}\}.
\]

\begin{figure}[tbp]
\begin{center}
\includegraphics[height=3.6cm]{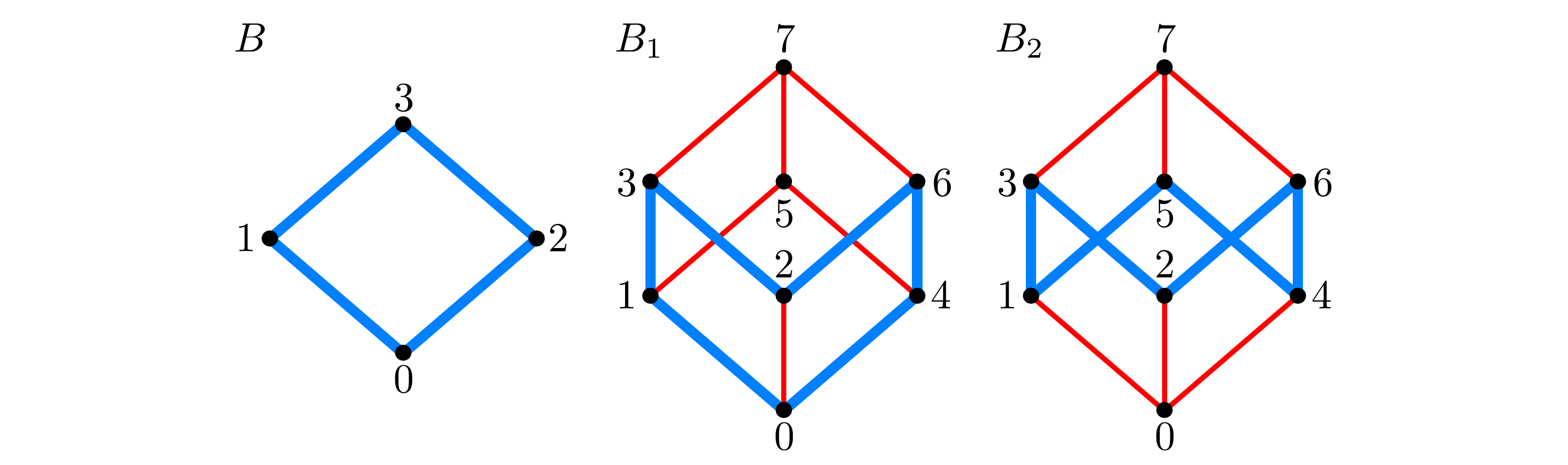}
\caption{The edge-coloured hypercubes $B$, $B_1$, and $B_2$. The
blue edges are represented by thick blue lines, and the red edges by
thin red lines.}\label{Fig:Edge}
\end{center}
\end{figure}

Given these definitions it is easy to see that
$\pi_e(\mc{Q}_2)=\pi_{ce}(B)$ where $B$ is a $\mc{Q}_2$ with all
four of its edges coloured blue, see Figure \ref{Fig:Edge}. We are
also interested in $\pi_e(C_6)$. It is not to difficult to show that
all $6$-cycles in $\mc{Q}_n$ lie within a $\mc{Q}_3$ subgraph. There
are two distinct $6$-cycles in a $\mc{Q}_3$ up to isomorphism, their
edge sets are $E_1 = \{01, 13, 32, 26, 64, 40\}$, and $E_2 = \{51,
13, 32, 26, 64, 45\}$. Let $B_1$ be a $\mc{Q}_3$ with those edges in
$E_1$ coloured blue and the remaining edges coloured red, see Figure
\ref{Fig:Edge}. Similarly let $B_2$ be a $\mc{Q}_3$ with those edges
in $E_2$ coloured blue and the remaining edges coloured red, see
Figure \ref{Fig:Edge}. Hence forbidding a blue edged $6$-cycle in an
edge-coloured hypercube is equivalent to requiring that it is $B_1$
and $B_2$-free. Therefore $\pi_e(C_6)=\pi_{ce}(B_1, B_2)$. By
extending Razborov's semidefinite flag algebra method to
edge-coloured hypercubes, we will be able to prove the following
bounds on $\pi_{ce}(B)$ and $\pi_{ce}(B_1,B_2)$.

\begin{theorem}\label{THM:SquareFree}
$\pi_e(\mc{Q}_2)=\pi_{ce}(B)\leq 0.60680$ and
$\pi_e(C_6)=\pi_{ce}(B_1, B_2)\leq 0.37550$.
\end{theorem}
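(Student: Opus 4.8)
The plan is to mirror, for edge-colourings, the flag-algebra argument of Section~\ref{SUBSEC:RazVertex}. Since the reductions $\pi_e(\mc{Q}_2)=\pi_{ce}(B)$ and $\pi_e(C_6)=\pi_{ce}(B_1,B_2)$ have already been observed, it suffices to establish the two upper bounds on the coloured edge Tur\'an densities. First I would set up the edge-coloured analogues of all the objects of Section~\ref{SUBSEC:RazVertex}: a \emph{flag} is now an edge-coloured hypercube $G_F$ together with an injection $\theta$ singling out a canonically labelled subcube, a \emph{type} is a flag with $\theta$ bijective, $\msig$ is the set of $\F$-free edge-coloured $\sigma$-flags of dimension $m$ up to isomorphism, and $p(H;G)$, $p(F,\theta;G)$, $p(F_a,F_b,\theta;G)$ are defined exactly as before. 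The only genuinely new ingredient is the base identity: the edge density of $G$ is the probability that a uniformly random $\mc{Q}_1$ (a single edge) of $G$ is blue, so averaging over $l$-dimensional subcubes gives
\begin{equation}\label{HypercubeEdgeTrivialDensity}
d_e(G)=\sum_{H\in\mc{H}}d_e(H)p(H;G),
\end{equation}
the edge-coloured counterpart of~(\ref{HypercubeTrivialDensity}), where $\mc{H}$ is the family of all $\F$-free edge-coloured hypercubes of a fixed dimension $l$.

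From here every line of Section~\ref{SUBSEC:RazVertex} goes through with $d_v$ replaced by $d_e$: equations~(\ref{a1:eq})--(\ref{q1:eq}) still hold, and for any finite collection of types $\sigma_i$, dimensions $m_i\leq(l+dim(\sigma_i))/2$, and positive semidefinite matrices $Q_i$ of dimension $|\mc{F}^{\sigma_i}_{m_i}|$, combining the resulting inequalities with~(\ref{HypercubeEdgeTrivialDensity}) yields
\[
\pi_{ce}(\F)\leq\max_{H\in\mc{H}}\Bigl(d_e(H)+\sum_i c^{(i)}_H\Bigr),
\]
with each $c^{(i)}_H$ an explicitly computable rational number independent of $G$. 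So the task reduces to the semidefinite program of choosing the $Q_i$ to minimise the right-hand side. Concretely I would fix a small dimension $l$ --- large enough that both the forbidden cubes and a pair of $m_i$-dimensional subcubes overlapping in a $\sigma_i$ embed into a dimension-$l$ cube, which is not demanding here since every $C_6$ already sits inside a $\mc{Q}_3$ --- use a computer search to enumerate $\mc{H}$ and the flag sets $\mc{F}^{\sigma_i}_{m_i}$ up to isomorphism, compute the coefficient tensors $\mathbf{E}_{\theta\in\Theta_H}[p(F_a,F_b,\theta;H)]$, and feed the resulting SDP to CSDP~\cite{B}.

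The output of CSDP is only a floating-point near-optimal solution, so the final and most delicate step is to turn it into a rigorous bound: I would round each $Q_i$ to a nearby \emph{rational} positive semidefinite matrix and then recompute $\max_{H\in\mc{H}}\bigl(d_e(H)+\sum_i c^{(i)}_H\bigr)$ exactly in rational arithmetic, checking that it stays below $0.60680$ (respectively $0.37550$). This exact verification, together with the underlying data, is what the program \progCE{} and the files in~\cite{BV10} are for. I expect the main obstacles to be practical rather than conceptual: the number of $\F$-free edge-coloured hypercubes of dimension $l$ (and of $\sigma$-flags of dimension $m$) grows quickly, so both the enumeration and the exact coefficient computation must be machine-assisted; and the rounding must be done carefully enough --- perturbing eigenvalues away from $0$ and clearing denominators judiciously, as in~\cite{B11}~(Section~2.4.2) --- that positive semidefiniteness survives and the bound is not inflated past the stated targets.
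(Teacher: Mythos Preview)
Your proposal is correct and follows exactly the approach the paper takes: the paper explicitly says the extension to edge-coloured hypercubes is ``virtually identical'' to Section~\ref{SUBSEC:RazVertex} with $d_v$ replaced by $d_e$, and its proof of Theorem~\ref{THM:SquareFree} simply points to the data files and the exact-arithmetic verifier \progCE{}. One minor slip: the relevant data and program live in~\cite{BE10} (the edge-density checker), not~\cite{BV10}.
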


We omit the details of extending Razborov's technique to
edge-coloured hypercubes; it is virtually identical to the extension
described in Section \ref{SUBSEC:RazVertex}, with the term
``edge-coloured'' replacing the term ``vertex-coloured''.

\begin{proof}[Proof of Theorem \ref{THM:SquareFree}]
All the necessary data (types, flags, matrices, etc.) needed to
prove $\pi_{ce}(B)\leq 0.60680$ can be found in the file
\texttt{B.txt} \cite{BE10}. The calculation that converts this data
into an upper bound is too long to do by hand and so we provide the
program {\progCE} (see \cite{BE10}) to verify our claim (the program
does not use floating point arithmetic so no rounding errors can
occur). Similarly the data needed to prove $\pi_{ce}(B_1,B_2)\leq
0.37550$ can be found in the file \texttt{B1B2.txt} \cite{BE10}.
\end{proof}

\section{Partially defined hypercubes}\label{SUBSEC:PartialHypercubes}

In this section we improve the bounds given in Theorem
\ref{THM:SquareFree} by applying a slightly modified version of
Razborov's method to partially defined hypercubes. We define a
\emph{partially defined hypercube} simply to be an edge-coloured
hypercube where instead of colouring the edges with just two
colours, red and blue, we use three colours, red, blue, and grey.
Note that throughout this section we will use the less cumbersome
term \emph{red-blue hypercube} to refer to red-blue edge-coloured
hypercubes (as defined in Section \ref{SUBSEC:EdgeHypercube}).

The interpretation we gave to a red-blue hypercube in Section
\ref{SUBSEC:EdgeHypercube} was that it represented a subgraph whilst
retaining the underlying structure of the hypercube. The subgraph
could be reconstructed by removing those edges which are red, and
keeping those which are blue. We have a similar interpretation for
partially defined hypercubes, as before those edges that are red or
blue represent edges to remove or keep respectively, but grey edges
represent edges which are undefined (i.e.\ the colouring does not
specify whether to remove the edge or not). Hence a partially
defined hypercube does not represent a single subgraph but a set of
subgraphs.

The use of grey edges causes us to lose some information that could
be useful in bounding the Tur\'an density, but it also reduces the
size of the computations which we can use to our advantage. To
illustrate this point further we first note that in order to
calculate the upper bound for $\pi_{ce}(B)$ using Razborov's method
we need to explicitly determine $\mc{H}$ the family of all
$l$-dimensional $B$-free red-blue hypercubes for some choice of $l$.
The size of $\mc{H}$ gives us a rough indication of how hard the
computation will be. To get the bound given in Theorem
\ref{THM:SquareFree} we looked at $3$-dimensional red-blue
hypercubes (i.e.\ $l=3$) which results in $|\mc{H}| = 99$, and
consequently the computation is very quick on most computers. We can
achieve a better bound by looking at $4$-dimensional red-blue
hypercubes but this unfortunately results in $|\mc{H}|=3212821$
which is currently computationally unfeasible on an average
computer. However, by looking at $4$-dimensional partially defined
hypercubes we can reduce $|\mc{H}|$ to the more feasible value of
$90179$ and still make use of some of the information held in
red-blue hypercubes of dimension $4$. In fact we have some choice
over how many grey edges our partially defined hypercubes will
contain which translates into some control over how large we wish to
make $|\mc{H}|$ and how difficult a computation we want to attempt.

\begin{figure}[tbp]
\begin{center}
\includegraphics[height=3.6cm]{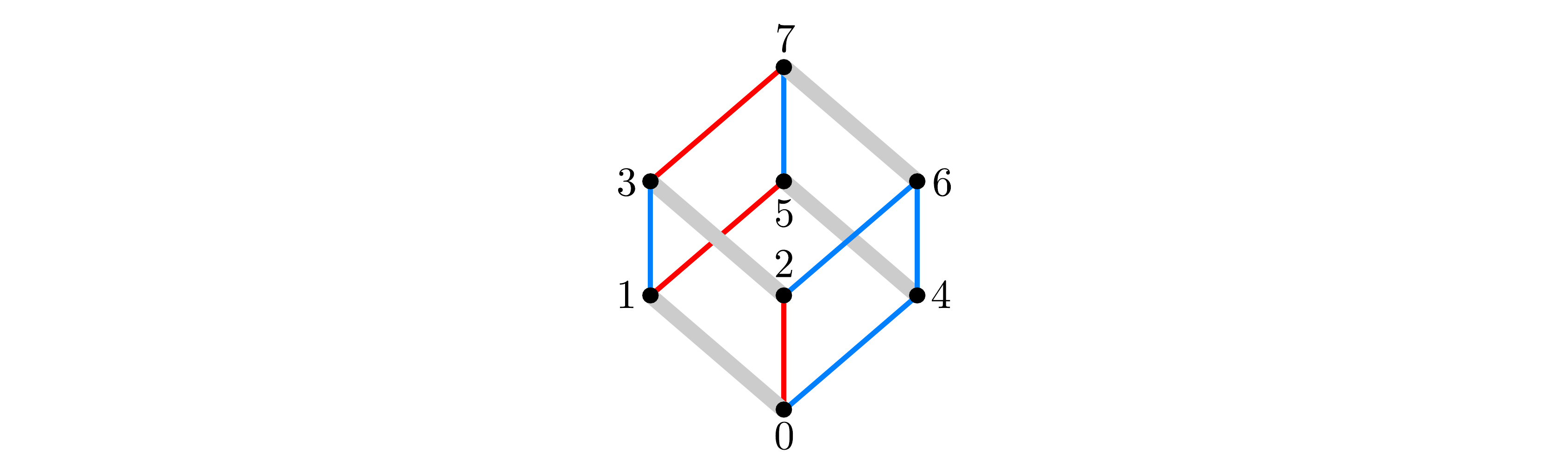}
\caption{An example of a partial hypercube. The grey coloured edges
are $01, 23, 45,$ and $67$, the other edges are coloured red and
blue.}\label{Fig:Partial}
\end{center}
\end{figure}

In this section we are primarily going to consider a very specific
type of partially defined hypercube, namely ones where if $ij$ is an
edge then it is grey if and only if $|i-j|=1$, see Figure
\ref{Fig:Partial} for an example.
We will refer to such a partially defined hypercube simply as a
\emph{partial hypercube}.

By applying a slightly modified version of Razborov's method to
partial hypercubes we can improve the bounds given in Theorem
\ref{THM:SquareFree}.

\begin{theorem}\label{THM:SquareFree_Partial}
$\pi_e(\mc{Q}_2)=\pi_{ce}(B)\leq 0.60318$ and
$\pi_e(C_6)=\pi_{ce}(B_1, B_2)\leq 0.36577$.
\end{theorem}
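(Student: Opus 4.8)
The plan is to mirror the argument of Section \ref{SUBSEC:RazVertex} and the (omitted) edge-coloured analogue of Theorem \ref{THM:SquareFree}, but carried out in the category of partial hypercubes rather than red-blue hypercubes. First I would set up the combinatorial framework: define isomorphism and the subcube relation for partial hypercubes exactly as for edge-coloured hypercubes, with the proviso that a grey edge of a potential subcube $F_1$ may map to an edge of $F_2$ of any colour (since grey means "undefined"), while blue must map to blue. The key observation that makes this useful is that a red-blue hypercube $G$ is $B$-free (respectively $(B_1,B_2)$-free) if and only if \emph{some} partial hypercube obtained from $G$ by regreying the edges $ij$ with $|i-j|=1$ is $B$-free (respectively $(B_1,B_2)$-free) as a partial hypercube — indeed it \emph{is} one such partial hypercube — so $\pi_{ce}(B)$ is bounded above by the corresponding coloured Tur\'an density for partial hypercubes. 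Concretely, the edge density $d_e(G)$ of the original red-blue hypercube is recovered by counting blue edges, and the grey edges contribute nothing to the numerator, so one works with a density functional on partial hypercubes that counts only blue edges over all $2^{n-1}n$ edges; the averaging identity analogous to (\ref{HypercubeTrivialDensity}) still holds because density is still a probability that a random edge (now a random edge among those of a fixed type class, to respect the grey structure) is blue.

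Next I would run the flag algebra machinery verbatim. Fix $l$ (here $l=4$), let $\mc{H}$ be the set of all $\F$-free partial hypercubes of dimension $l$ up to isomorphism — the excerpt already records $|\mc{H}| = 90179$ for $\F = \{B\}$ — and for each admissible type $\sigma$ and dimension $m \le (l + dim(\sigma))/2$ form $\msig$, the $\F$-free $\sigma$-flags of dimension $m$. The identities (\ref{a1:eq}), (\ref{a2:eq}) and (\ref{q1:eq}) go through unchanged: the probabilistic interpretation of $p(F_a,F_b,\theta;G)$ in terms of two random $m$-subcubes overlapping exactly in $\mathrm{im}(\theta)$ does not care whether edges carry two or three colours, and the coefficients $c_H = \sum_{F_a,F_b} q_{ab}\,\mathbf{E}_{\theta\in\Theta_H}[p(F_a,F_b,\theta;H)]$ are again explicitly computable integers (times rationals). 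Combining with the density averaging identity yields $\pi_{ce}(\F) \le \max_{H\in\mc{H}}(d_e(H) + \sum_i c_{H,i})$ for any tuple of positive semidefinite matrices $Q_i$. One then solves the resulting semidefinite program with CSDP to obtain numerical $Q_i$, rounds them to rational positive semidefinite matrices (keeping the psd property, e.g.\ by the diagonal-shift / $LDL^\top$ trick), and recomputes the bound exactly in rational arithmetic, arriving at $0.60318$ for $B$ and $0.36577$ for $\{B_1,B_2\}$.

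The subtle point — and the thing that really needs checking rather than just invoking — is that passing to partial hypercubes is \emph{sound}, i.e.\ that the partial-hypercube Tur\'an density genuinely upper-bounds $\pi_{ce}(B)$ and $\pi_{ce}(B_1,B_2)$ and does not accidentally discard extremal red-blue configurations or, conversely, admit spurious ones. The clean way to see this is: given any large $B$-free red-blue hypercube $G$ on $\mc{Q}_n$ with $d_e(G)$ close to $\pi_{ce}(B)$, regrey precisely the edges $ij$ with $|i-j|=1$ to obtain a partial hypercube $G'$ on the same vertex set; then (a) $G'$ is $B$-free as a partial hypercube, because any subcube copy of $B$ in $G'$ (all edges required blue) would have all of $B$'s edges mapped to blue — hence non-grey — edges of $G'$, which are exactly the non-regreyed edges, giving a genuine blue $B$ in $G$; and (b) $d_e(G')$, defined as the fraction of \emph{all} edges that are blue, differs from $d_e(G)$ by at most the fraction of edges that were regreyed, namely exactly $1/n$ of all edges (the edges $ij$ with $|i-j|=1$ form one direction-class out of $n$), which is $o(1)$. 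So $\pi_{ce}(B) \le \pi_{ce}^{\text{partial}}(B)$, and symmetrically for $\{B_1,B_2\}$. The remaining obstacle is purely computational bookkeeping: generating the $90179$ partial hypercubes and the flag/type data correctly, and performing the exact rational verification — which is exactly why the paper defers to the data file \texttt{B1B2.txt} (and its $B$ analogue) together with the verifier {\progCPE}.

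\begin{proof}[Proof sketch of Theorem \ref{THM:SquareFree_Partial}]
Run the argument of Section \ref{SUBSEC:RazVertex} (in its edge-coloured form, as used for Theorem \ref{THM:SquareFree}) on partial hypercubes of dimension $l=4$ instead of red-blue hypercubes, using the soundness reduction above: regreying the $|i-j|=1$ direction-class changes the blue-edge density by $o(1)$ and preserves $B$-freeness (resp.\ $\{B_1,B_2\}$-freeness), so $\pi_{ce}(B) \le \max_{H\in\mc{H}}(d_e(H)+\sum_i c_{H,i})$ over $\mc{H}$ the $90179$ $B$-free $4$-dimensional partial hypercubes, for any psd matrices $Q_i$, and similarly for $\{B_1,B_2\}$. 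The required data (types, flags, psd matrices) are in \cite{BE10}; the exact rational evaluation, performed by {\progCPE}, yields the bounds $0.60318$ and $0.36577$.
\end{proof}
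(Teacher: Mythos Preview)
Your proposal has a genuine gap: the flag-algebra argument you describe---run ``verbatim'' on partial hypercubes with $l=4$---is exactly the method the paper sets up in Section~\ref{SUBSEC:RazPartial}, and the paper then states explicitly (opening of Section~\ref{SUBSEC:PartialExtra}) that this method \emph{does not improve on the $(l-1)$-dimensional red-blue bound}. Intuitively, the semidefinite program you wrote down cannot tell the difference between a genuine $4$-dimensional partial hypercube and two unrelated $3$-dimensional red-blue hypercubes joined by grey edges, so it only recovers the bounds $0.60680$ and $0.37550$ of Theorem~\ref{THM:SquareFree}, not $0.60318$ and $0.36577$.

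The missing ingredient is the family of linear constraints $\sum_{H\in\mc{H}} a_H\, p(H;G)=0$ of Section~\ref{SUBSEC:PartialExtra}. These are obtained by computing, in two different ways, the probability $p_\Phi(S;G)$ that a random labelled $l$-cube in $G$ looks like a given $S\in\mc{S}$ after greying out \emph{both} dimension-$0$ and dimension-$1$ edges: since $G$ is a genuine red-blue hypercube, swapping dimensions $0$ and $1$ via $\phi_{01}$ cannot change this probability, whence $\sum_H\big(p_\Phi(S;H)-p_\Phi(\phi_{01}(S);H)\big)p(H;G)=0$. These equalities encode precisely the fact that the grey edges sit inside a real $l$-cube, and adding them (with free multipliers $\mu_j$) to the SDP is what produces the stated improvement. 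Your sketch needs to incorporate these constraints; without them the claimed numerical bounds do not follow. (Two minor points: the density used on partial hypercubes is $d_p(H)=\text{blue}/\text{non-grey}$, so the averaging identity (\ref{Eq:PartialTrivialDensity}) is exact rather than an $o(1)$ approximation; and the relevant data files and verifier are in \cite{BP}, not \cite{BE10}.)
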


The relevant data files \texttt{PartialB.txt},
\texttt{PartialB1B2.txt}, and the program {\progCPE} used for
verification purposes can be found in the source files section on
the arXiv see \cite{BP}.

The rest of this section will be devoted to explaining the technical
details of extending flag algebras to partial hypercubes. We will
extend Razborov's method to partial hypercubes in the simplest and
most obvious way in Section \ref{SUBSEC:RazPartial}. Unfortunately
this extension does not produce better bounds than those given in
Theorem \ref{THM:SquareFree}. We will remedy this in Section
\ref{SUBSEC:PartialExtra} by incorporating some extra constraints
into the method.

\subsection{Razborov's method on partial hypercubes}\label{SUBSEC:RazPartial}

We will begin with some basic definitions. Note that the definitions
and explanations will involve red-blue hypercubes as defined in
Section \ref{SUBSEC:EdgeHypercube} so care must be taken to avoid
confusion.

We will use the notation $(n,\kappa)_p$ to formally represent an
$n$-dimensional partial hypercube, where $\kappa:
E(\mc{Q}_n)\to\{\text{red, blue, grey}\}$, and
$\kappa(v_1v_2)=\text{grey}$ if and only if $|v_1-v_2|=1$. We define
$V(F)$ and $E(F)$ for a partial hypercube $F=(n,\kappa)_p$ to be
$V(\mc{Q}_n)$ and $E(\mc{Q}_n)$ respectively. Consider two partial
hypercubes $F_1=(n_1,\kappa_1)_p,$ and $F_2=(n_2,\kappa_2)_p$. We
say $F_1$ is \emph{isomorphic} to $F_2$ if there exists a bijection
$f:V(F_1)\to V(F_2)$ such that for all $v_1v_2\in E(F_1)$,
$f(v_1)f(v_2)\in E(F_2)$ and
$\kappa_1(v_1v_2)=\kappa_2(f(v_1)f(v_2))$.
The \emph{edge density} of $F=(n,\kappa)_p$ is
\[
d_p(F)=\frac{|\{v_1v_2\in E(F) :
\kappa(v_1v_2)=\text{blue}\}|}{|\{v_1v_2\in E(F) :
\kappa(v_1v_2)\neq\text{grey}\}|}.
\]
To ease notation later we define $\mc{P}$ to be a function which
converts red-blue hypercubes to partial hypercubes by colouring some
edges grey. In particular $\mc{P}((n,\kappa)_e)=(n,\kappa')_p$ where
$\kappa'(v_1v_2)=\text{grey}$ if $|v_1-v_2|=1$ and
$\kappa'(v_1v_2)=\kappa(v_1v_2)$ otherwise.

Let $\F$ be the family of red-blue hypercubes, for which we are
trying to compute an upper bound of $\pi_{ce}(\F)$.

In the red-blue hypercube case our application of Razborov's method
involved computing $\mc{H}$ the family of all $\F$-free red-blue
hypercubes of dimension $l$ up to isomorphism (for some choice of
$l$). In the partial hypercube version of Razborov's method,
$\mc{H}$ will be a family of partial hypercubes, and we will require
that it retain the key property that it represents all the possible
$l$-dimensional subcubes that could appear in a large $\F$-free
red-blue hypercube. With this in mind we make the following
definitions. We say a partial hypercube $(n,\kappa)_p$ is
\emph{$\F$-free} if $(n,\kappa')_e$ is $\F$-free where
$\kappa'(v_1v_2)=\text{red}$ if $\kappa(v_1v_2)=\text{grey}$ and
$\kappa'(v_1v_2)=\kappa(v_1v_2)$ otherwise. Using this we define
$\mc{H}$ to be the family of $\F$-free partial hypercubes of
dimension $l$, up to isomorphism.

For $H\in \mc{H}$ and a large $\F$-free red-blue hypercube $G$, we
define $p(H;G)$ to be the probability that a random hypercube of
dimension $l$ from $G$, together with a random canonical labelling
of its vertices, induces a coloured red-blue subcube $H'$ such that
$\mc{P}(H')$ is isomorphic to $H$. Note that
$\sum_{H\in\mc{H}}p(H;G)=1$. Trivially, the edge density of $G$ is
equal to the probability that a random edge from $G$ is coloured
blue. Thus, averaging over partial hypercubes of dimension $l$ in
$G$, we can express the edge density of $G$ as
\begin{align}\label{Eq:PartialTrivialDensity}
d_e(G) = \sum_{H\in \mc{H}}d_p(H)p(H;G),
\end{align}
(provided $l\geq 2$) and hence $\pi_{ce}(\mc{F})\leq
\max_{H\in\mc{H}}d_p(H)$. This bound can be improved upon by
considering how small pairs of $\mc{F}$-free partial hypercubes can
intersect, we use Razborov's notion of flags and types to describe
and utilize this information.

For partial hypercubes we define flags and types as follows. A
\emph{flag}, $F=(G_F,\theta)$, is a partial hypercube
$G_F=(n,\kappa)_p$ for some $n\geq 1$ and $\kappa$, together with an
injective map $\theta: \{0,1,\ldots,2^s-1\}\to V(G_F)$ for some
$s\geq 1$, such that $\theta(i)\theta(j)\in E(G_F)$ if and only if
$i$ and $j$ differ by precisely one digit in their binary
representations and $\kappa(\theta(i)\theta(j))=\text{grey}$ when
$|i-j|=1$ (i.e.\ $\theta$ induces a canonically labelled partial
hypercube). If $\theta$ is bijective (and so $|V(G_F)|=2^s$) we call
the flag a \emph{type}. For ease of notation given a flag
$F=(G_F,\theta)$ we define its dimension $dim(F)$ to be the
dimension of the hypercube underlying $G_F$. Given a type $\sigma$
we call a flag $F=(G_F,\theta)$ a $\sigma$-\emph{flag} if the
induced labelled partial subcube of $G_F$ given by $\theta$ is
$\sigma$.

We define $\msig$ be the set of all $\mc{F}$-free $\sigma$-flags of
dimension $m$, up to isomorphism (where $m\leq (l+dim(\sigma))/2$).
Let $\Theta$ be the set of all injective functions from
$\{0,1,\ldots,2^{dim(\sigma)}-1\}$ to $V(G)$, that result in a
canonically labelled hypercube. Given $F\in\msig$ and
$\theta\in\Theta$ we define $p(F,\theta;G)$ to be the probability
that an $m$-dimensional hypercube $R$ chosen uniformly at random
from $G$ subject to $\text{im}(\theta)\subseteq V(R)$ induces a
$\sigma$-flag with labelled vertices given by $\theta$ that is
isomorphic to $F$.

Given $\mathbf{p}_\theta=(p(F,\theta;G):F\in \mc{F}^\sigma_m)$, and
$Q$ a positive semidefinite matrix, we can show
$\mathbf{E}_{\theta\in\Theta}[\mathbf{p}_\theta^TQ\mathbf{p}_\theta]$
is non-negative and can be written explicitly in terms of $p(H;G)$
for $H\in\mc{H}$. We omit the details as the argument is virtually
identical to that given in Section \ref{SUBSEC:RazVertex}. As before
we can consider multiple types $\sigma_i$, and positive semidefinite
matrices $Q_i$ to create multiple terms of the form
$\mathbf{E}_{\theta\in\Theta}[\mathbf{p}_{i,\theta}^TQ_i\mathbf{p}_{i,\theta}]$
which will result in better bounds on $\pi_{ce}(\F)$. Let
\begin{align}\label{Eq:cH_Partial}
\sum_i{\mathbf{E}_{\theta\in\Theta}[\mathbf{p}_{i,\theta}^TQ_i\mathbf{p}_{i,\theta}]}
= \sum_{H\in\mc{H}}c_Hp(H;G)+o(1),
\end{align}
where $c_H$ can be explicitly calculated from $H$, and the $o(1)$
term vanishes as $|V(G)|$ tends to infinity. Using
(\ref{Eq:cH_Partial}) together with the fact that the $Q_i$ are
positive semidefinite, and applying it to
(\ref{Eq:PartialTrivialDensity}) gives us
\begin{align}\label{Eq:PartialNewD}
d_e(G)\leq
d_e(G)+\sum_i{\mathbf{E}_{\theta\in\Theta}[\mathbf{p}_{i,\theta}^TQ_i\mathbf{p}_{i,\theta}]}
= \sum_{H\in\mc{H}}(d_p(H)+c_H)p(H;G)+o(1).
\end{align}
Therefore
\[
\pi_{ce}(\F)\leq \max_{H\in\mc{H}}(d_p(H)+c_H).
\]
The matrices $Q_i$ and hence the optimal bound on $\pi_{ce}(\F)$ can
be determined by solving a semidefinite program.

\subsection{Additional constraints}\label{SUBSEC:PartialExtra}

Unfortunately the method as it stands does not perform well.
Considering $l$-dimensional $\F$-free partial hypercubes produces
precisely the same bound as considering $(l-1)$-dimensional red-blue
hypercubes and the latter results in a significantly easier
computation. The reason for this lack of improvement is that the
semidefinite program we create does not encode that we are looking
at $l$-dimensional partial hypercubes. We get the same semidefinite
program by instead looking at any two $(l-1)$-dimensional red-blue
subcubes of $G$ artificially made into a partial hypercube by adding
grey edges between them. To be clear, in this case we do not require
that the two $(l-1)$-dimensional red-blue hypercubes come from the
same $l$-dimensional hypercube (whereas in the partial hypercube
case we do). Consequently we gain no extra information that was not
already given in $(l-1)$-dimensional red-blue hypercubes, and hence
no improvement in the bound.

We remedy this situation by introducing some linear constraints of
the form $\sum_{H\in\mc{H}} a_H p(H;G)=0$ which hold for all
$\F$-free $G$, and the coefficients $a_H$ can be explicitly
calculated from the partial hypercubes $H\in\mc{H}$. Given a set of
such constraints $\sum_{H\in\mc{H}} a_{j,H} p(H;G)=0$, indexed by
$j$, and an associated set of real values $\mu_j$ we have
\begin{align*}
0 = \sum_j \mu_j \sum_{H\in\mc{H}}a_{j,H}p(H;G) =
\sum_{H\in\mc{H}}\alpha_H p(H;G)
\end{align*}
where $\alpha_H = \sum_j \mu_j a_{j,H}$. Applying this to
(\ref{Eq:PartialNewD}) gives us
\[
d_e(G)\leq \sum_{H\in\mc{H}} (d_p(H)+c_H+\alpha_H) p(H;G)+o(1),
\]
which implies a potentially better bound of
\[
\pi_{ce}(\F)\leq \max_{H\in\mc{H}}(d_p(H)+c_H+\alpha_H).
\]
Note that we can still pose the question of determining the optimal
semidefinite matrices and coefficients $\mu_j$ as a single
semidefinite program.

The linear constraints we will use come from calculating the
probabilities of various partially defined hypercubes appearing in
$G$. Let us define $\mc{S}$ to be the set of all red-blue-grey
edge-coloured $\mc{Q}_l$ (where $l$ is the dimension of the
$H\in\mc{H}$) with the property that $ij\in E(\mc{Q}_l)$ is coloured
grey if and only if $|i-j|=1$ or $2$ (note that such partially
defined hypercubes are not partial hypercubes). We will show for
each $S\in\mc{S}$ we can form a linear constraint of the required
form $\sum a_H p(H;G)=0$, but first we require some definitions.

We say an edge $ij\in E(\mc{Q}_l)$ \emph{lies across dimension} $d$
if $|i-j|=2^d$. Define a \emph{$\mc{Q}_l$-mapping} to be a bijective
function $\phi:V(\mc{Q}_l)\to V(\mc{Q}_l)$ satisfying $ij\in
E(\mc{Q}_l)$ if and only if $\phi(i)\phi(j)\in E(\mc{Q}_l)$. Given
$a,b\in\{0,\ldots,l-1\}$ define $\phi_{ab}$ to be the
$\mc{Q}_l$-mapping where the binary representation of $\phi_{ab}(i)$
is the same as that of $i$ but with bits $a$ and $b$ swapped (we
take bit $0$ to be the least significant bit). Note that the effect
of $\phi_{ab}$ is to swap the edges that lie across dimension $a$
with those that lie across dimension $b$. Let $\Phi$ be the set of
all $\mc{Q}_l$-mappings $\phi$ that satisfy for all $ij\in
E(\mc{Q}_l)$ with $|i-j|=1$ or $2$, $|i-j|=|\phi(i)-\phi(j)|$. In
other words $\Phi$ consists of all $\mc{Q}_l$-mappings which keeps
those edges lying across dimension $0$ or $1$ still lying across
dimension $0$ or $1$ respectively. Given two $l$-dimensional
partially defined hypercubes $F_1,F_2$, we say $F_1$ is isomorphic
to $F_2$ under $\Phi$ if there exists $\phi\in\Phi$ such that the
colour of every edge $ij$ in $F_1$ matches that of $\phi(i)\phi(j)$
in $F_2$. Recall that in Section \ref{SUBSEC:RazPartial} we defined
the function $\mc{P}$ to convert red-blue hypercubes into partial
hypercubes by recolouring the edges that lie across dimension $0$ to
grey. We similarly define $\mc{P}'$ to be a function which recolours
the edges that lie across dimension $0$ or $1$ to grey. Finally,
given a $\mc{Q}_l$-mapping $\phi$ and a partially defined hypercube
$F$ of dimension $l$, we will abuse notation and take $\phi(F)$ to
be a partially defined hypercube of dimension $l$ with edge
$\phi(i)\phi(j)$ in $\phi(F)$ having the same colour as $ij$ in $F$.


Given $S\in\mc{S}$ we will form our linear constraint by considering
$p_{\Phi}(S;G)$ the probability that a random hypercube of dimension
$l$ from $G$, together with a random canonical labelling of its
vertices, induces a coloured red-blue subcube $S'$ such that
$\mc{P}'(S')$ is isomorphic to $S$ under $\Phi$. We can calculate
$p_{\Phi}(S;G)$ explicitly in terms of $p(H;G)$ by taking $S'$ then
applying $\mc{P}$ to get a partial hypercube $H\in\mc{H}$. With
probability $p(H;G)$, $\mc{P}(S')$ is isomorphic to $H$, so we can
write
\[
p_{\Phi}(S;G) = \sum_{H\in\mc{H}}p_{\Phi}(S;H)p(H;G)
\]
where $p_{\Phi}(S;H)$ is the probability that
$\mc{P}'(\phi_{1n}(H))$ is isomorphic to $S$ under $\Phi$ for some
random choice of $n$ from $\{1,\ldots,l-1\}$ (we interpret
$\phi_{11}$ as the identity map).

It should be clear that since $p_{\Phi}(S;G)$ involves looking at a
random red-blue hypercube, that $p_{\Phi}(S;G) =
p_{\Phi}(\phi_{01}(S);G)$ and both the left and right hand sides can
be written in terms of $p(H;G)$ giving us the linear constraint
\[
\sum_{H\in\mc{H}}\big(p_{\Phi}(S;H)-p_{\Phi}(\phi_{01}(S);H)\big)p(H;G)
= 0
\]
as desired.

\section{Other partially defined objects}

In Section \ref{SUBSEC:PartialHypercubes} we gave a concrete example
of using partially defined graphs to improve a bound. In particular
the graph was a hypercube and we chose our undefined edges in a very
particular way. It is worth noting that there are many other ways we
could have chosen the undefined edges, for example we could have
said that an edge $v_1v_2$ is grey if and only if both $v_1$ and
$v_2$ are odd. Alternatively we could have taken $\mc{H}$ to be all
red-blue-grey edge-coloured hypercubes with precisely $t$ grey edges
for some fixed choice of $t$. This choice of how we choose the
undefined edges gives us some control of the size of the computation
and the improvement in the bound of the Tur\'an density we can
expect. We by no means think that Theorem
\ref{THM:SquareFree_Partial} gives the best possible bounds using
partial hypercubes, however the description of partial hypercubes we
used gave us a reasonable improvement and applying Razborov's method
to it was not too difficult.

Although so far we have only looked at Tur\'an density problems on
hypercubes it should be clear that the method of using partially
defined objects will work in most of the applications Razborov's
method has been applied. As an example we will briefly describe one
way it can be used in the case of $3$-uniform hypergraphs, or
$3$-graphs for short.

\subsection{Partially defined $3$-graphs}

We will pick our undefined edges in the following way, choose two
distinct vertices $u$, $v$ from the $3$-graph and make every triple
containing precisely one of $\{u,v\}$ an undefined edge. Such
partially defined graphs have the nice property that if we remove
$u$, and $v$ we get a fully defined standard $3$-graph the only
information we have lost is from edges and non-edges of the form
$uvw$, which we can recover by colouring vertex $w$ red and blue say
to represent whether $uvw$ was a non-edge or an edge respectively.
Hence we can represent our partially defined $3$-graphs as red-blue
vertex-coloured $3$-graphs. Note that linear constraints such as
those given in Section \ref{SUBSEC:PartialExtra} are still needed to
encode that the coloured vertices represent edges and non-edges,
these can be obtained by again calculating the probabilities of
partially defined graphs appearing in two different ways.

We have used such partially defined graphs to improve the bound of
$\pi(K_4^3)$, the Tur\'an density of the complete $3$-graph on $4$
vertices. The best known bound was held by Razborov \cite{R4} at
$0.56167$ by considering $3$-graphs of order $6$. We can decrease
this to $0.5615$ by looking at red-blue vertex-coloured $3$-graphs
of order $6$, together with regularity constraints as described by
Hladk\'y, Kr\'al', and Norine \cite{HKN}. We will describe the
regularity constraints in more detail in Section
\ref{SUBSEC:Regularity}. The relevant data required to prove the
$0.5615$ bound can be found in \texttt{K4.txt} located in the source
files section on the arXiv, see \cite{BP}.

Although the amount we have decreased the bound by is not that
impressive, a significantly better bound may be possible, as due to
time restrictions we did not make full use of the information
contained in the flags. In particular we chose to ignore the colours
of the non-labelled vertices. This was achieved by redefining a flag
to be a (partially labelled) red-blue-grey vertex-coloured
$3$-graph, where the non-labelled vertices are coloured grey to
represent undefined edges.

It is important to note that there are many alternative types of
partially defined $3$-graphs we could try instead of red-blue vertex
coloured $3$-graphs, any one of which may result in a significant
improvement to the bound of $\pi(K_4^3)$. Our choice to use red-blue
vertex-coloured $3$-graphs was motivated only by the fact that it
would be simple to explain and implement.

\subsubsection{Regularity constraints}\label{SUBSEC:Regularity}

Our proof that $\pi(K_4^3)\leq 0.5615$ involves regularity
constraints such as those described by Hladk\'y, Kr\'al', and Norine
for digraphs \cite{HKN}. In this section we will describe the
constraints and show they can be applied to the problem of bounding
$\pi(\F)$ for any forbidden family of covering $3$-graphs $\F$. A
graph is said to be \emph{covering} if every pair of vertices belong
to an edge.

To get a bound for $\pi(\F)$ Razborov's method involves looking at a
large $\F$-free graph $G$ to get a bound for the density $d(G)$. By
considering a sequence of edge maximal $\F$-free graphs of
increasing order, $d(G)$ can be made to tend to $\pi(\F)$ thereby
giving us a bound on the Tur\'an density. We will instead work with
a sequence of $\F$-free graphs $\{G_n\}_{n=1}^\infty$, which have
the property that $|V(G_n)| = n$, and $(1-o(1))n$ of the vertices of
$G_n$ have a degree density which is at most $o(1)$ from $\rho$ for
some fixed value $\rho$, where $o(1)$ tends to $0$ as $n$ tends to
infinity. (We define the \emph{degree density} of a vertex $v$ to be
the number of edges containing $v$ divided by the number of triples
containing $v$.) We will begin by showing that for any non-negative
$\rho\leq\pi(\F)$ there exists just such a sequence of graphs. Hence
when we apply Razborov's method to $G_n$ we can not only assume it
is $\F$-free but that it is also almost regular (in a very precise
sense) which is a condition we can use to achieve a better upper
bound.

We will create $\{G_n\}_{n=1}^\infty$ from $\{M_n\}_{n=1}^\infty$ a
sequence of extremal $\F$-free graphs with $|V(M_n)| = n$ and
$|E(M_n)| = ex(n,\F)$. The difference between the maximum and
minimum degree density in $M_n$ can be at most $2/(n-1)$ otherwise
removing the vertex with the minimum degree density and cloning the
vertex with the maximum degree density would result in an $n$ vertex
graph that is $\F$-free and has more edges than $M_n$. It is trivial
to check that the average degree density is $d(M_n)$ and hence every
vertex in $M_n$ has a degree density between $d(M_n)+2/(n-1)$ and
$d(M_n)-2/(n-1)$. We can take $M_n$ and randomly remove each of its
edges with probability $1-\rho/d(M_n)$ (recall that
$\rho\leq\pi(\F)\leq d(M_n)$ so this is a valid probability). An
application of Chebyshev's inequality shows that there must exist a
way of removing edges from $M_n$ such that $(1-o(1))n$ of the
vertices will have a degree density that is at most $
n^{-\frac{1}{2}}$ from their expected degree density (which is their
degree density in $M_n$ multiplied by $\rho/d(M_n)$). In other words
$(1-o(1))n$ of the vertices will have degree density $\rho+o(1)$ as
required.

Let $\sigma$ be the type of order $1$ (a single labelled vertex),
and Let $F$ be the $\sigma$-flag of order $3$ whose underlying graph
is a single edge (just to be clear here we are considering flags and
types to be simple uncoloured graphs). Since $G_n$ is almost regular
we know that for most choices of $\theta$ we have $p(F,\theta;G_n) =
\rho+o(1)$. In fact for any $\sigma$-flag $C$, we have
$p(C,F,\theta;G_n) = \rho p(C,\theta;G_n)+o(1)$ for $(1-o(1))n$
choices of $\theta$. Hence
\[
\mathbf{E}_{\theta\in\Theta}[p(C,F,\theta;G_n)-\rho p(C,\theta;G_n)]
= o(1)
\]
and the left hand side can be expressed explicitly in terms of
$p(H;G_n)$. Clearly for every $\sigma$-flag $C$ we can construct
such a constraint, which we will refer to as a \emph{regularity
constraint}.

By summing a linear combination of regularity constraints as well as
terms of the form $\mathbf{E}_{\theta\in\Theta}[\mathbf{p}_\theta^T
Q \mathbf{p}_\theta]$ we can create an expression which can be
written explicitly in terms of $p(H;G_n)$, and is asymptotically
non-negative provided $\rho \leq \pi(\F)$. For a given $\rho$ the
problem of finding the optimal linear combination of the regularity
constraints and positive semidefinite matrices $Q$ that minimize the
coefficients of $p(H;G_n)$ can be posed as a semidefinite
programming problem. If all the coefficients of $p(H;G_n)$ can be
made strictly negative a contradiction occurs implying $\rho$ must
in fact be an upper bound for $\pi(\F)$.

Note that finding the optimal upper bound of $\pi(\F)$ is not a
semidefinite programming problem but testing whether a specific
value is an upper bound is a semidefinite programming problem. It
can be proven (though we will not do so here) that if the
semidefinite program shows $\rho$ to be an upper bound then it will
also show $\rho'$ to be an upper bound for any $\rho'\geq\rho$. This
allows us to carry out a binary search to determine the optimal
bound to whatever accuracy we wish.

Extending this method to use partially defined graphs is a trivial
matter. The coefficients of $p(H;G_n)$ for the regularity
constraints can still be explicitly calculated even when $H$ is a
vertex-coloured $3$-graph. In fact calculating the probability of
the intersection of $C$ and $F$ appearing is equivalent to
considering the occurrence of the vertex-coloured flag formed from
$C$ by colouring the labelled vertex blue, and the other vertices
grey. This allows us to construct regularity constraints from larger
$\sigma$-flags, which in turn means we can add many more regularity
constraints into our semidefinite program. Because we are using
partially defined graphs we also need to include additional
constraints similar to those described in Section
\ref{SUBSEC:PartialExtra}. Since such constraints are of the form
$\sum_{H\in\mc{H}} a_H p(H;G)=0$ we can incorporate them into the
semidefinite program in the same way as the regularity constraints.

\section{Open problems and future work}

The power of using partially defined graphs comes from being able to
reduce the size of $\mc{H}$ to make computations more feasible.
Another way we could do this is the following. Let $\mc{H}$ be a
family of $\F$-free graphs we wish to reduce and let $\mc{H}'$ be a
family of subsets of $\mc{H}$ that partition $\mc{H}$, i.e.\
$\bigcup_{H'\in\mc{H}'} H' = \mc{H}$ and for all distinct
$H_1',H_2'\in\mc{H}'$ we have $H_1'\cap H_2'=\emptyset$. Now instead
of applying Razborov's semidefinite flag algebra technique to
$H\in\mc{H}$ we can apply it to $H'\in\mc{H'}$. We can define
$p(H';G)$ to be $\sum_{H\in H'}p(H;G)$. There is some difficulty in
expressing quantities like the products of flags
$\mathbf{E}_{\theta\in\Theta}[p(F_a,F_b,\theta;G)]$ in terms of a
linear sum of $p(H';G)$. However, we can overcome this by bounding
the coefficients above and below, for example
\begin{align}\label{Eq:UpperLower}
\sum_{H'\in\mc{H}'} c_{min, H'} p(H';G)\leq
\mathbf{E}_{\theta\in\Theta}[p(F_a,F_b,\theta;G)] \leq
\sum_{H'\in\mc{H}'} c_{max, H'} p(H';G)
\end{align}
where
\[
c_{min,H'}=\min_{H\in
H'}\mathbf{E}_{\theta\in\Theta_H}[p(F_a,F_b,\theta;H)]
\]
and
\[
c_{max,H'}=\max_{H\in
H'}\mathbf{E}_{\theta\in\Theta_H}[p(F_a,F_b,\theta;H)].
\]
In order to keep quantities like
$\mathbf{E}_{\theta\in\Theta}[\mathbf{p}_\theta^TQ\mathbf{p}_{\theta}]$
non-negative we have to be careful about whether we apply the upper
or lower bound given in (\ref{Eq:UpperLower}). We can do this and
still represent the problem as a semidefinite program, by replacing
the matrix $Q$ with two matrices $Q_+$ and $Q_-$ such that $Q =
Q_+-Q_-$ and the entries of $Q_+$ and $Q_-$ are all non-negative.
Consequently
$\mathbf{E}_{\theta\in\Theta}[\mathbf{p}_\theta^TQ\mathbf{p}_{\theta}]
=
\mathbf{E}_{\theta\in\Theta}[\mathbf{p}_\theta^TQ_+\mathbf{p}_{\theta}]
-\mathbf{E}_{\theta\in\Theta}[\mathbf{p}_\theta^TQ_-\mathbf{p}_{\theta}]$
and we can ensure non-negativity by applying upper bounds to terms
coming from
$\mathbf{E}_{\theta\in\Theta}[\mathbf{p}_\theta^TQ_+\mathbf{p}_{\theta}]$
and lower bounds to terms coming from
$\mathbf{E}_{\theta\in\Theta}[\mathbf{p}_\theta^TQ_-\mathbf{p}_{\theta}]$.

It is unclear whether this method will ever produce any improvements
over the partially defined graph version of Razborov's method. Also
it is not apparent what a good choice for the partition $\mc{H}'$
is. Nevertheless it may be worth further investigation.

The study of Tur\'an problems in hypercubes is largely motivated by
Erd\H os' conjecture that $\pi_e(\mc{Q}_2)=1/2$. This is perhaps the
most interesting question in the area, and still remains open. We
have provided improvements on the bounds of various edge and vertex
Tur\'an densities but were only able to calculate $\pi_v(R_2)$
exactly. Improving the bounds further to get exact results in any of
the problems discussed would be of interest. Also any significant
improvement of the bound of $\pi(K_4^3)$ would be interesting.

\section{Acknowledgements}
The results in Sections \ref{SUBSEC:VertexHypercube} and
\ref{SUBSEC:EdgeHypercube} originally appeared in the author's PhD
thesis in March 2011, see \cite{B11}. J\'ozsef Balogh, Ping Hu,
Bernard Lidick\'y, and Hong Liu independently rediscovered and
verified the weaker edge Tur\'an density results that do not use
partial hypercubes (Theorem \ref{THM:SquareFree}) in December 2011,
see \cite{BHLL}.


The author would like to thank John Talbot for his helpful comments
about the paper, and for running the calculations for the new
$\pi(K_4^3)$ bound on his computer (the author's own seven year old
computer was not up to the task).

\end{document}